\documentclass[a4paper, 12pt]{article}
\usepackage{amsmath}
\usepackage{amsthm}
\usepackage{amssymb}
\usepackage[T1]{fontenc}
\usepackage[sc]{mathpazo}
\usepackage{enumerate}
\usepackage{etoolbox}
\usepackage{comment}

\patchcmd{\section}{\scshape}{\bfseries}{}{}

\newcommand{\R}{{\mathbb{R}}}

\newcommand{\N}{{\mathbb{N}}}

\DeclareMathOperator{\Log}{Log}

\newtheorem{theorem}{Theorem}%[section]
\newtheorem{proposition}[theorem]{Proposition}
\newtheorem{lemma}[theorem]{Lemma}
\newtheorem{corollary}[theorem]{Corollary}

\theoremstyle{definition}

\theoremstyle{remark}
\newtheorem{remark}[theorem]{Remark}

\newcommand{\intersect}{\cap}
\renewcommand{\supset}{\supseteq}
\newcommand{\x}{\mathsf{x}}

\renewcommand{\epsilon}{\varepsilon}

\renewcommand{\subset}{\subseteq}
\newcommand{\Union}{\bigcup}
\newcommand{\union}{\cup}

\newcommand{\congruent}{\equiv}

\newcommand{\MultiIndGrade}[1]{\N^n_{#1}}
\newcommand{\PosOrthant}{\R_+^n}

\begin{document}

% Keywords command
\providecommand{\keywords}[1]
{
  \small	
  \textit{Keywords:   } #1
}

% MSC code
\providecommand{\msccode}[1]
{
  \small	
  \textit{2020 MSC:   } #1
}

\title{\vspace{-2.5 cm}\Large Positivity of polynomials on the nonnegative part of certain affine hypersurfaces}
\author{\large Colin Tan \and \large Wing-Keung To}

%\title{Positivity of polynomials on the nonnegative part of certain affine hypersurfaces}
%\author{Colin Tan \and Wing-Keung To}

\date{}
%\date{18 Feb 2026 | Version 6 (after discussion)}

\maketitle
%\numberwithin{equation}{section}

\begin{abstract}
We consider polynomials on the intersection of the closed positive orthant with the height-$1$ level hypersurface of certain polynomials with positive coefficients.  We show that any polynomial strictly positive on such a semi-algebraic set can be represented by some polynomial with only positive coefficients.  This result generalizes a result of P\'olya which corresponds to the case when the semi-algebraic set is the standard simplex.  Our proof uses the Archimedean Representation Theorem from real algebra.
\end{abstract}

%-------------------------------------------------------------------%
\let\thefootnote\relax\footnotetext{\noindent 
\keywords{Archimedean Representation Theorem, level hypersurface, polynomials, positive coefficients, positive orthant,
Positivstellensatz  
} \\
\noindent 
\msccode{Primary 12D99, 14P10; secondary 13J30, 26C99}
}

\section{Introduction and Statement of Result}\label{sec: introducton}
Positivstellens{\"a}tze is a central and widely studied topic in real algebraic geometry.
Recall that a \emph{Positivstellensatz} asserts the sufficiency of a polynomial's positivity on a space (typically a semialgebraic set) for its representability in terms of an algebraic expression that immediately witnesses its positivity on the space. This algebraic expression is known as a \emph{positivity certificate}. 
In P\'olya's classical Positivstellensatz, the positivity certificate is given in terms of polynomials with positive coefficients \cite{Polya1928}.
Let $n$ be a positive integer. The \emph{standard $(n -1)$-simplex} $\Delta_{n}$ in $\R^n$ is the simplex whose $n$ vertices are $(1, 0, \dots, 0)$, $(0, 1, 0, \dots, 0)$, \dots, $(0, \dots, 0, 1) \in \R^n$. 
We say that a polynomial $p \in 
\R[x_1, \dots, x_n]$ is  \emph{strictly positive on a subset $X \subset \R^n$} (and write $p > 0$ on $X$) if $p(x) > 0$ for all $x \in X$. 

\begin{theorem}[P\'olya \cite{Polya1928}] \label{thm: Polya}
Let $p\in\R[x_1, \dots, x_n]$ be a homogeneous polynomial such that $p > 0$ on $\Delta_{n}$.
Then there exists some nonnegative integer $N_o$ such that for all integers $N \geq N_o$, 
all monomial terms of degree $N + \deg(p)$ in 
$(x_1 + \cdots + x_n)^N p$ have strictly positive coefficients. 
\end{theorem}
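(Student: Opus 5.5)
We plan to prove Theorem~\ref{thm: Polya} by the classical route of Hardy--Littlewood--P\'olya: write out the coefficients of $(x_1+\cdots+x_n)^N p$ explicitly and compare them with values of $p$ at rational points of $\Delta_n$. Let $d=\deg(p)$ and write $p=\sum_{|\beta|=d} a_\beta x^\beta$. Since $\Delta_n$ is compact and $p>0$ on it, there is $\lambda>0$ with $p\ge \lambda$ on $\Delta_n$. Set $C=\sum_\beta |a_\beta|$.

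The first step is the coefficient formula. Expanding with the multinomial theorem, for $|\alpha|=N+d$ the coefficient of $x^\alpha$ in $(x_1+\cdots+x_n)^N p$ is
\[
c_\alpha=\sum_{\beta\le\alpha}a_\beta\frac{N!}{(\alpha-\beta)!}
=\frac{N!\,(N+d)^d}{\alpha!}\sum_{\beta}a_\beta\prod_{i=1}^n\frac{\alpha_i(\alpha_i-1)\cdots(\alpha_i-\beta_i+1)}{(N+d)^{\beta_i}} .
\]
Here we use $\alpha!/(\alpha-\beta)!=\prod_i\alpha_i^{\underline{\beta_i}}$, the falling factorial, which vanishes automatically when $\beta_i>\alpha_i$. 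Put $t=1/(N+d)$ and $z=\alpha t\in\Delta_n$. The inner sum then becomes $p_t(z):=\sum_\beta a_\beta\prod_i z_i(z_i-t)\cdots(z_i-(\beta_i-1)t)$. This is a polynomial in $(z,t)$ with $p_0=p$.

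The second step is uniform approximation. Each product $\prod_i z_i(z_i-t)\cdots$ differs from $z^\beta$ by at most $K t$ for $z\in\Delta_n$ and $t\in[0,1]$, where $K$ depends only on $d$ and $n$. We can see this by expanding: every term other than $z^\beta$ carries a factor $t^j$ with $j\ge1$, multiplied by coordinates bounded by $1$ and combinatorial constants. Hence $|p_t(z)-p(z)|\le CKt$ on $\Delta_n$.

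For the conclusion, choose $N_o$ so that $CK/(N_o+d)<\lambda$. Then for all $N\ge N_o$ and all $|\alpha|=N+d$ we get $p_t(\alpha t)\ge\lambda-CKt>0$. The prefactor $N!(N+d)^d/\alpha!$ is positive, so $c_\alpha>0$. This covers every monomial of degree $N+d$, including those with some $\alpha_i=0$ (that is, $z$ on the boundary of the simplex), because the estimate holds on all of $\Delta_n$.

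The only delicate point is the bookkeeping in the first step. We need the identity between the multinomial coefficients and the falling-factorial form to be exact, including the convention for $\beta\not\le\alpha$. We also need the error constant $K$ to be uniform in $\alpha$ and $N$, which comes from the fact that $z$ ranges over the compact simplex. Once this is in place, the rest is a compactness argument.
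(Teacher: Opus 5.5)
Your proof is correct. It is the classical Hardy--Littlewood--P\'olya argument, which is a genuinely different route from the paper's.

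\textbf{Checking your steps.} The identity $c_\alpha=\frac{N!\,(N+d)^d}{\alpha!}\,p_t\bigl(\alpha/(N+d)\bigr)$ is exact, since the falling factorials kill the terms with $\beta\not\le\alpha$. The bound $|p_t-p|\le CKt$ is uniform on $\Delta_n$ because $t\in(0,1]$ and $0\le z_i\le 1$. This needs $N+d\ge 1$, and the case $d=0$ is trivial anyway. Compactness supplies $\lambda$.

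\textbf{The paper's route.} The paper does not prove Theorem~\ref{thm: Polya} directly. It recovers it as the case $r=x_1+\cdots+x_n$ of Theorem~\ref{thm: mainTheorem}. That theorem is proved by applying the Archimedean Representation Theorem to $S=\R_+[\x]+(r-1)$. The substantive step is Proposition~\ref{lem: algebraicInteriorPoints}, where $\Log(r)\supseteq\MultiIndGrade{1}$ is used to dominate any $h_+$ by a multiple of some $g_M\congruent 1 \pmod{r-1}$.

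Passing from the resulting congruence $p\congruent q_N \pmod{r-1}$ to the exact statement of Theorem~\ref{thm: Polya} still requires homogenizing, which the paper leaves implicit:
\begin{enumerate}[(1)]
\item Multiply the degree-$k$ part of $q_N$ by $r^{N-k}$. This gives a form $Q$ of degree $N$ with all coefficients positive.
\item The difference $r^{N-\deg(p)}p-Q$ is a form vanishing on $\{r=1\}$.
\item Hence it vanishes on the dense set $\{r\neq 0\}$, so it is zero.
\end{enumerate}

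\textbf{What each approach buys.} Your approach gives an elementary, self-contained and effective proof. It yields an explicit $N_o$ of order $CK/\lambda$, in terms of $\min_{\Delta_n}p$ and the coefficients of $p$, as in Powers--Reznick. The Archimedean Representation Theorem is non-constructive and gives no bound.

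The paper's approach buys generality. Your computation depends on the exact multinomial expansion of powers of the linear form $x_1+\cdots+x_n$, which has no counterpart for, e.g., $r=x+y+x^2$, where $\{r=1\}_+$ is not even convex. The abstract argument needs only that $S$ is archimedean and that $\mathcal{X}(S)$ consists of evaluations at points of $\{r=1\}_+$.
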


P\'olya's original proof in \cite{Polya1928} used an analytic argument and was later reproduced in a book on inequalities \cite[pp.\ 57--60]{HLP1952}. Powers-Reznick effectivised P\'olya's proof and gave an explicit upper bound of $N_o$ (depending on $p$, of course) \cite{PowersReznick2001}. It was Berr-W\"ormann who observed that Theorem \ref{thm: Polya} is actually a concrete instance of the Archimedean Representation Theorem, a fundamental result in real algebra, originally also known as the Kadison-Dubois Theorem \cite{BerrWoermann2001}.

\paragraph{Main Result.}  Let $\PosOrthant \subset \R^n$ denote the closed positive orthant, which consists of all the points $x =(x_1, \dots, x_n)\in \R^n$ with each $x_1, \dots, x_n \ge 0$.
Given a subset $X \subset \R^n$,
its \emph{nonnegative part} $X_+$ is defined as the intersection
\begin{equation}
 X \intersect \PosOrthant=:X_+ \subset \R^n.
\end{equation}
Thus the standard simplex $\Delta_n \subset \R^n$ in Theorem \ref{thm: Polya} may also be regarded as the nonnegative part of the affine hyperplane $H$ defined by $x_1 + \cdots + x_n = 1$. 
Our objective in this paper to obtain a generalisation of Theorem \ref{thm: Polya} from the standard simplex $\Delta_n =H_+$ to a Positivstellensatz on the nonnegative part of certain affine hypersurfaces.  

Let $\R_+$ denote the semifield of nonnegative real numbers, and let $\R[\x]:=\R[x_1,\ldots,x_n]$ denote the $\R$-algebra of real polynomials in $x_1,\ldots,x_n$.
Let $\R_+[\x] \subset \R[\x]$ denote the $\R_+$-subsemialgebra of real polynomials whose coefficients are all nonnegative. Let $\N$ denote the set of nonnegative integers.  An $n$-tuple $\alpha =(\alpha_1, \dots, \alpha_n) \in \N^n$ serves as a \emph{multi-index} for the monomial $x^\alpha := x_1^{\alpha_1} \dots x_n^{\alpha_n}$ in $\R[\x]$. Under addition, $\N^n$ forms a commutative monoid.
For any given $p = \sum_{\alpha \in \N^n} a_\alpha x^\alpha\in \R_+[x]$, we let $\Log(p) := \{\alpha \in \N^n : a_\alpha > 0\}$. 
%One easily sees that the identities $\Log(p + p') = \Log(p) \union \Log(p')$ and $\Log(pp') = \Log(p) + \Log(p')$ hold for all $p, p' \in \R_+[\x]$. Here $\Log(p) + \Log(p')$ denotes the Minkowski sum of $\Log(p)$ and $\Log(p')$ in $\N^n$.  A particular instance of the latter identity is $\Log(cp) = \Log(p)$ for all $p \in \R_+[\x]$ and real $c > 0$.
For a subset $J \subset \N^n$, write $d J := \underbrace{J + \cdots + J}_{d \, \textrm{times}} \subset \N^n$ for the $d$-fold Minkowski sum of $J$ with itself, where $d \in \N$. In particular, $0 J = \{(0, \dots, 0)\}$. 
%Then one easily sees that $\Log(p^d) = d \Log(p)$ for all $p \in \R_+[\x]$ and $d \in \N$.
Let 
$
\N^n_1 := \{(1, 0, \dots, 0)$, $(0, 1, 0, \dots, 0)$, $\dots$, $(0, \dots, 0, 1)\} \subset \N^n
$.
For example, $\Log(x_1 + \cdots + x_n) = \MultiIndGrade{1}$. 
Let $\{r = 1\} \subset \R^n$ denote the level hypersurface of a polynomial $r \in \R[\x]$ of height $1$, which consists of all points $x \in \R^n$ with $r(x) = 1$.  For $f,g,h\in \R[\x] $, write $f \congruent g \pmod{h}$ if $f-g$ is divisible by $h$ in $\R[\x]$.

Our main result in this paper is the following:

\begin{theorem} \label{thm: mainTheorem}
Let $r \in \R_+[\x]$ with $\Log(r) \supseteq \MultiIndGrade{1}$ and let $f \in \R[\x]$, where $\x = (x_1, \dots, x_n)$. Then the following statements are equivalent:

\begin{enumerate}[(a)]
\item $f > 0$ on $\{r = 1\}_+$.
\item There exists some $N_o\in\N$ such that for all integers $N\geq N_o$, there exists
 some $q_N \in \R_+[\x]$ with $\Log(q_N) = \Union_{d = 0}^N d\Log(r)$ such that 
 \begin{equation} \label{eq: certificate}
f \congruent q_N \pmod{r - 1}
\end{equation}
\item There exist some $N \in \N$ and some $q_N \in \R_+[\x]$ with $\Log(q_N) = \Union_{d = 0}^N d\Log(r)$ such that \eqref{eq: certificate} holds.
\end{enumerate}
\end{theorem}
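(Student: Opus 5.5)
Implications (b)$\Rightarrow$(c) and (c)$\Rightarrow$(a) are immediate. For (c)$\Rightarrow$(a), first note that $\{r=1\}_+$ is compact. Since $\Log(r)\supseteq\MultiIndGrade{1}$, every coefficient $c_i$ of $x_i$ in $r$ is positive, so for $x\in\PosOrthant$ we get $1=r(x)\ge c_i x_i$; thus $\{r=1\}_+$ is bounded. Also $0\notin\{r=1\}_+$ unless $r(0)=1$, and that case causes no trouble. Now take $x\in\{r=1\}_+$. Then $f(x)=q_N(x)$. The set $\Log(q_N)$ contains $0\in 0\Log(r)$, so $q_N$ has a positive constant term, and all its other terms are nonnegative at $x$. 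Hence $q_N(x)>0$.

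The main work is (a)$\Rightarrow$(b), which I will do in two stages. First I produce a single certificate with positive coefficients, using the Archimedean Representation Theorem. Then I pad its support to the exact shape $\Union_{d\le N}d\Log(r)$ for all large $N$.

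\emph{Stage 1.} Work in the ring $A=\R[\x]/(r-1)$ and let $S$ be the image of $\R_+[\x]$ there; $S$ is a semiring containing $\R_+$. I will show $S$ is Archimedean. It suffices to find, for each $i$, a constant $C$ with $C\pm x_i\in S$. We have $C+x_i\in S$ trivially. For the other sign, $1-c_ix_i\equiv r-c_ix_i \pmod{r-1}$, and $r-c_ix_i\in\R_+[\x]$; dividing by $c_i$ gives $1/c_i - x_i\in S$. Since the Archimedean elements form a subsemiring, all of $A$ is then bounded by $S$. The ring homomorphisms $A\to\R$ that are nonnegative on $S$ are exactly the evaluations at points of $\{r=1\}_+$: nonnegativity on the $x_i$ forces the point into $\PosOrthant$, and the point lies on $r=1$. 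So (a) says $f$ is positive on the character space. The Archimedean Representation Theorem, in its semiring version (Berr–Wörmann), then gives $f\equiv q\pmod{r-1}$ for some $q\in\R_+[\x]$. If the theorem only yields $f\in S$, I apply it to $f-\epsilon$, using compactness to find $\epsilon>0$ with $f-\epsilon>0$ on $\{r=1\}_+$. This gives $f\equiv q+\epsilon$ with a strictly positive constant term.

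\emph{Stage 2.} This is the step I expect to be the main obstacle: controlling $\Log$ exactly.
\begin{itemize}
\item Every monomial $x^\alpha$ with $|\alpha|=d$ lies in $d\MultiIndGrade{1}\subseteq d\Log(r)$. So $\Log(q)\subseteq\Union_{d\le D}d\Log(r)$ with $D=\deg q$.
\item Set $q' = \sum_{d=0}^{N}\delta\, r^d$ for a tiny $\delta>0$. We have $\Log(r^d)=d\Log(r)$ and $q'\equiv (N+1)\delta \pmod{r-1}$. Choose $\delta$ with $(N+1)\delta<\epsilon$.
\item Take $q_N = q + (\epsilon-(N+1)\delta) + q'$. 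Then $q_N\equiv f \pmod{r-1}$, and $q_N$ has all coefficients nonnegative.
\item For $N\ge N_o:=D$, its support is exactly $\Union_{d=0}^N d\Log(r)$: it contains this union because of $q'$, and it is contained in it because $\Log(q)$ is.
\end{itemize}
The positivity margin $\epsilon$, guaranteed by compactness and strict positivity, is what makes the padding possible.
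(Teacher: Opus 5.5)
Your proof is correct. Its skeleton matches the paper's: the Archimedean Representation Theorem applied to $S=\R_+[\x]+(r-1)$, then padding with $\sum_{d\le N} r^d$, with $\Log(q)$ absorbed via $d\MultiIndGrade{1}\subseteq d\Log(r)$. Two steps are done differently, and the difference is worth noting.

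\emph{Archimedeanity.} You prove it the textbook way: each generator is bounded, since $1/c_i - x_i \congruent (r-c_ix_i)/c_i \pmod{r-1}$ with $r-c_ix_i\in\R_+[\x]$, and then you use the lemma that bounded elements form a subring. The paper instead proves a purely algebraic characterization of the order units $S^\circ$ in its Proposition. There, any $h$ is dominated by $C g_M \congruent C$, because $\Log(g_M)=\Union_{d\le M} d\Log(r)$ contains every exponent of degree $\le M$. Archimedeanity is then just the special case $f=1$. This avoids the subring lemma and shows, independently of any geometry, that (b) and (c) are each equivalent to $f\in S^\circ$.

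\emph{Use of the representation theorem.} You prove (c)$\Rightarrow$(a) by direct evaluation. For (a)$\Rightarrow$(b) you use the version giving only $f\in S$, together with a compactness shift to $f-\epsilon$, which requires your bound $x_i\le 1/c_i$. The paper uses the version giving $f\in S^\circ$. Then $1\le_S Cf$ supplies the strictly positive constant term directly, with no separate compactness argument, and the converse direction comes from the same equivalence.

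Your route is more elementary in the easy direction and relies on standard facts. The paper's is more self-contained on the algebraic side, reducing the theorem to the representation theorem plus one characterization of $S^\circ$.
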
 

\begin{remark}
We remark that Theorem \ref{thm: mainTheorem} actually holds for any level hypersurface of $r$ with height $c>0$ (i.e., with $\{r = 1\}_+$ in (i) and $r-1$ in \eqref{eq: certificate} replaced by $\{r = c\}_+$ and $r-c$ respectively). We leave the easy deduction from Theorem \ref{thm: mainTheorem} to this general case to the reader.
\end{remark}

By taking $r= x_1 + \cdots + x_n$ in Theorem \ref{thm: mainTheorem}, in which case $\Log(r) = \MultiIndGrade{1}$ and $\{r = 1\}_+ = \Delta_n$, one recovers P\'olya's Positivstellensatz (Theorem \ref{thm: Polya}). In the spirit of Berr-W\"ormann \cite{BerrWoermann2001}, we will give a proof of Theorem \ref{thm: mainTheorem} in Section \ref{sec: proof} using the Archimedean Representation Theorem.

The conclusion of Theorem \ref{thm: mainTheorem} need not hold if the condition \lq$\Log(r) \supseteq \MultiIndGrade{1}$\rq~is not satisfied.  As an example, take $n=1$ and
$r=x^2 \in \R_+[\x]$, noting that $\Log(r)=\{2\}\not\supset \N_1=\{1\}$. Then $f=x$ satisfies (a), but neither (b) nor (c), in Theorem \ref{thm: mainTheorem}. Indeed $\{r = 1\}_+=\{1\}$ and $f(1) = 1 > 0$, so $f$ satisfies (a). Take any $q_N \in \R_+[x]$ with $\Log(q_N) = \Union_{d = 0}^N d\Log(r) = \{0, 2, 4, \dots, 2N\}$. When 
 $f - q_N = x - q_N$ is divided by $r - 1 = x^2 - 1$, the remainder is $x - q_N(1) \neq 0$, and thus 
 \eqref{eq: certificate} does not hold. Hence $f$ does not satisfy (b) nor (c). 
%degreeand thus $f=x$ satisfies (a) in Theorem \ref{thm: mainTheorem}.  However, if $f\in\R[x]$ satisfies (b) or (c) in Theorem \ref{thm: mainTheorem} (with $r=x^2$), then the degree of every nonzero monomial term in $f$ must be even (since both $r-1$ and $q_N$ in \eqref{eq: certificate} will have such property).

Theorem \ref{thm: mainTheorem}, being denominator free, differs from previous generalisations of P\'olya's Positivstellensatz obtained by various authors. P\'olya's Positivstellensatz is what is known as a uniform-denominator Positivstellensatz, due to the presence of the multiplier $(x_1 + \cdots + x_n)^N$ in the positivity certificate \cite{Reznick1995}. Putinar-Vasilescu \cite{PutinarVasilescu1999}, Dickinson-Povh \cite{DickinsonPovh2015} and Baldi-Sinn-Telek-Weigert \cite{BSTW} have obtained various uniform-denominator Positivstellens\"atze that generalise P\'olya's Positivstellensatz. In comparison, our Positivstellensatz is denominator free.

The juxtaposition of the positivity certificate and the type of space in Theorem \ref{thm: mainTheorem} is also relatively unusual. The positivity certificate in \eqref{eq: certificate} is congruence to a polynomial with positive coefficients, akin to Positivstellens\"atze of P\'olya \cite{Polya1928} and Handelman \cite{Handelman1988}. 
%In contrast, most other Positivstellens\"atze that are concrete instances of the Archimedean Representation Theorem have positivity certificates composed from sums of squares (such as Schm\"udgen's Positivstellensatz \cite{Schmuedgen1991}) or, more recently, sums of hermitian squares (such as Putinar-Scheiderer's Positivstellensatz \cite{PutinarScheiderer2010}). 
Yet, unlike P\'olya's Positivstellensatz and Handelman's Positivstellensatz, where the space on which the given polynomial is positive is a polytope, the space $\{r = 1\}_+$ in Theorem \ref{thm: mainTheorem} is not necessarily a polytope and, in fact, not even necessarily convex. For example, when $n = 2$ and $r = x + y + x^2$ (so $\Log(r) = \{(1,0),(0,1), (2,0) \} \supset \N^2_1$), the level curve $\{r = 1\} \subset \R^2$ is a parabola whose nonnegative part is a compact 
smooth curve with endpoints $((\sqrt{5} - 1)/2, 0)$ and $(0, 1)$ that is not a line segment. 

\begin{remark}
Theorem \ref{thm: mainTheorem} can also be deduced from a Positivstellensatz of Putinar-Scheiderer for hermitian polynomials on real hypersurfaces of complex affine space \cite[Proposition 2.3]{PutinarScheiderer2010}. This deduction is similar to how Theorem \ref{thm: Polya} may be deduced from a Positivstellensatz of Quillen \cite{Quillen1968} for hermitian bihomogeneous polynomials. 
That Theorem \ref{thm: Polya} may be deduced from Quillen's Positivstellensatz has been known since Catlin-D'Angelo \cite{CatlinDangelo1996}.
In comparison, our proof in this paper has the appeal that it stays within the realm of real polynomials.
\end{remark}

\section{The Archimedean Representation Theorem}
To facilitate our proof of Theorem \ref{thm: mainTheorem} in the next section, we recall the Archimedean Representation Theorem in the following version, working over the reals $\R$ as base field (see e.g. \cite[Theorem 5.4.4]{PrestelDelzell2001}, \cite[Theorem 1.5.9]{Scheiderer2009} or \cite[Theorem 5.3.1]{Scheiderer2024}). Let $A$ be an $\R$-algebra (always unital, associative and commutative).  Recall the semifield $\R_+\subseteq\R$ of nonnegative real numbers. By an $\R_+$-subsemialgebra of $A$, we mean a subset $S \subset A$ containing $\R_+$ and closed under addition and multiplication. Note that $S$ is a convex cone contained in the underlying $\R$-vector space of $A$. Using a notion from convex geometry, an \emph{algebraic interior point} of $S \subset A$ is a point $u \in S$ such that, for every affine line $\ell $ with $\ell\subset A$ through $u$, the interval $\ell \intersect S $ contains $u$ in its interior \cite[III.1.6]{Barvinok2002}. Note that $\ell \intersect S$ is a subinterval of $\ell$ because both $\ell$ and $S$ are convex. Algebraic interior points are also known as \emph{order units} and may be defined alternatively in terms of a quasiordering $\le_S$ on $A$ associated to $S$ as follows: For $f, f' \in A$, define $f \le_S f'$ $\iff$ $f' - f \in S$. 
Then $u \in S$ is an algebraic interior point of $S$ if and only if for each $f \in A$, there exists some $C >0$ such that $f\le_S Cu$  (see e.g. \cite{GoodearlHandelman1980}, \cite{HHL1980}).
Let $S^\circ$ denote the set of algebraic interior points of $S \subset A$. We say that $S \subset A$ is \emph{archimedean} if $1 \in S^\circ$. Finally, let $\mathsf{Alg}_{\R}(A, \mathbb{R})$ denote the set of $\R$-algebra homomorphisms (always unital) from $A$ to $\R$, and let 
$\mathcal{X}(S) \subset \mathsf{Alg}_{\R}(A, \mathbb{R})$ be the subset consisting of $\R$-algebra homomorphisms $\varphi : A \to \R$ such that $\varphi(S) \subset \R_+$. 

\begin{theorem}[Archimedean Representation Theorem {\cite[Theorem 5.4.4]{PrestelDelzell2001}, \cite[Theorem 1.5.9]{Scheiderer2009} or \cite[Theorem 5.3.1]{Scheiderer2024}}] \label{thm: ArchRepThm}
Let $A$ be an $\R$-algebra and let $S\subset A$ be an archimedean $\R_+$-subsemialgebra. 
Let $f \in A$. Then $\varphi(f) > 0$ for all $\varphi \in \mathcal{X}(S)$ if any only if $f \in S^\circ$. 
\end{theorem}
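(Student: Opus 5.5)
The easy direction needs only that $1\in S$. If $f\in S^\circ$, then the line through $f$ in the direction $1$ meets $S$ in an interval containing $f$ in its interior, so $f-\epsilon\in S$ for some $\epsilon>0$. Every $\varphi\in\mathcal{X}(S)$ is unital and satisfies $\varphi(S)\subset\R_+$, hence $\varphi(f)-\epsilon\ge 0$, i.e.\ $\varphi(f)\ge\epsilon>0$.

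For the converse I argue by contraposition: assuming $f\notin S^\circ$, I produce some $\varphi\in\mathcal{X}(S)$ with $\varphi(f)\le 0$. The plan has three steps.

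\textbf{Step 1 (a state with $L(f)\le 0$).} Call an $\R$-linear map $L:A\to\R$ a \emph{state} if $L(S)\subset\R_+$ and $L(1)=1$, and let $K$ be the set of states. Since $1\in S^\circ$, for each $a\in A$ there is $C_a>0$ with $C_a\pm a\in S$, so $|L(a)|\le C_a$ for all $L\in K$. Hence $K$ is a convex subset of the product $\prod_{a\in A}[-C_a,C_a]$, and it is closed there, so $K$ is compact in the topology of pointwise convergence by Tychonoff.

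To get a state with $L(f)\le 0$, I use the sublinear functional $\rho(a):=\inf\{t\in\R: t-a\in S\}$, which is finite by the archimedean property. I claim $\rho(-f)\ge 0$. If instead $\rho(-f)<0$, then $f-\epsilon\in S$ for some $\epsilon>0$, and this forces $f\in S^\circ$: given any $g\in A$, we have $g+C\in S$ for some $C>0$, so for small $\delta>0$ one gets $f+\delta g=(f-\epsilon)+\delta(g+C)+(\epsilon-\delta C)\in S$.

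Next define $L(tf):=t\rho(f)$ on $\R f$ (when $f\notin\R$) and check that $L\le\rho$ there. For $t\ge0$ this is equality. For $t<0$ it follows from $\rho(f)+\rho(-f)\ge\rho(0)=0$, where $\rho(0)=0$ because $-1\notin S$; if $-1\in S$ then $S=A$ and the statement is trivial. By Hahn--Banach, extend $L$ to all of $A$ with $L\le\rho$. Then for $s\in S$ we have $L(-s)\le\rho(-s)\le 0$, and $L(1)=1$ since $\rho(\pm1)=\pm1$. So $L\in K$.

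If $\rho(f)\le 0$, then $L(f)=\rho(f)\le 0$ and we are done. If $\rho(f)>0$, redo the construction starting from $-f$ to get $L\in K$ with $L(-f)=\rho(-f)\ge 0$, i.e.\ $L(f)\le 0$. In either case some $L\in K$ has $L(f)\le 0$.

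\textbf{Step 2 (pass to an extreme state).} The map $L\mapsto L(f)$ is continuous and linear on the compact convex set $K$. By Krein--Milman (Bauer's maximum principle), it attains its minimum at an extreme point $\varphi$ of $K$, so $\varphi(f)\le 0$.

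\textbf{Step 3 (extreme states are ring homomorphisms).} This is the key step and the main obstacle. Let $\varphi$ be an extreme point of $K$ and fix $s\in S$. After scaling by a positive constant we may assume $1-s\in S$. Define $L_1(a):=\varphi(sa)$ and $L_2(a):=\varphi((1-s)a)$. Both are nonnegative on $S$ because $S$ is closed under multiplication, and $L_1+L_2=\varphi$.

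If $0<\varphi(s)<1$, then
\[
\varphi=\varphi(s)\,\frac{L_1}{\varphi(s)}+(1-\varphi(s))\,\frac{L_2}{1-\varphi(s)}
\]
is a convex combination of two states. Extremality forces $L_1=\varphi(s)\varphi$, i.e.\ $\varphi(sa)=\varphi(s)\varphi(a)$ for all $a$.

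The boundary cases need a separate argument. If $\varphi(s)=0$, take $C>0$ with $C\pm a\in S$. Then $\varphi(s(C\pm a))\ge 0$, so $|\varphi(sa)|\le C\varphi(s)=0$. If $\varphi(s)=1$, apply the same argument to $1-s$, for which $\varphi(1-s)=0$.

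So $\varphi(sa)=\varphi(s)\varphi(a)$ for all $s\in S$ and $a\in A$. Since $A=S-S$ (write $a=(a+C)-C$ with $a+C\in S$), linearity gives $\varphi(ab)=\varphi(a)\varphi(b)$ for all $a,b\in A$. Together with $\varphi(1)=1$, this makes $\varphi$ a unital $\R$-algebra homomorphism with $\varphi(S)\subset\R_+$, i.e.\ $\varphi\in\mathcal{X}(S)$.

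Since $\varphi(f)\le 0$, this contradicts the hypothesis that $\varphi(f)>0$ for all $\varphi\in\mathcal{X}(S)$, which completes the proof.
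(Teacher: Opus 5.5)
The paper gives no proof of this statement. It is quoted from Prestel--Delzell and Scheiderer and used as a black box (with $A=\R[\x]$) in the proof of the main theorem. Your argument is a correct, self-contained proof along the classical functional-analytic route:
\begin{itemize}
\item archimedeanity makes the state space $K$ a compact convex subset of $\R^A$;
\item the sublinear functional $\rho$ and Hahn--Banach give a state with $L(f)\le 0$ whenever $f\notin S^\circ$, and your check that $\rho(-f)<0$ would force $f\in S^\circ$ is right;
\item Bauer's maximum principle moves this to an extreme state;
\item the splitting $\varphi=\varphi(s\,\cdot)+\varphi((1-s)\,\cdot)$, your correct treatment of the boundary cases $\varphi(s)\in\{0,1\}$, and $A=S-S$ show that extreme states are unital ring homomorphisms.
\end{itemize}
Compared with simply citing the theorem, your proof makes explicit exactly where the archimedean hypothesis enters: boundedness of states, finiteness of $\rho$, the normalisation $1-s\in S$, and the bound $|\varphi(sa)|\le C\varphi(s)$. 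The price is reliance on Tychonoff, Hahn--Banach and Krein--Milman, which purely algebraic proofs (via maximal proper preprimes) avoid.

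Two small tidy-ups are worth making. First, finiteness of $\rho(a)$ needs $-1\notin S$: from $t-a\in S$ and $C+a\in S$ one gets $t+C\in S$, hence $t\ge -C$. So make the reduction to $-1\notin S$ before asserting finiteness. Second, neither the restriction $f\notin\R$ nor the case split on the sign of $\rho(f)$ is needed. Setting $L(t(-f)):=t\rho(-f)$ on $\R f$ is always dominated by $\rho$, because $\rho(f)+\rho(-f)\ge\rho(0)=0$. This directly yields a state with $L(f)=-\rho(-f)\le 0$.
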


\section{Proof of Theorem \ref{thm: mainTheorem}} \label{sec: proof}

%Fix $r \in \R_+[\x]$ with $\Log(r) \supseteq \MultiIndGrade{1}$ as in Theorem \ref{thm: mainTheorem}.  Consider the principal ideal $(r - 1) \subset \R[\x]$ generated by $r - 1$. From now onwards, let \begin{equation}\label{eq: Ssubsemialg} S := \R_+[\x] + (r - 1) \subset \R[\x],\end{equation} which is an $\R_+$-subsemialgebra of $\R[\x]$.
As a preparation for the proof of Theorem \ref{thm: mainTheorem}, we first note that
the identities 
$\Log(p + p') = \Log(p) \union \Log(p')$, $\Log(pp') = \Log(p) + \Log(p')$, $\Log(cp) = \Log(p)$ and $\Log(p^d) = d \Log(p)$
hold for all $p, p' \in \R_+[\x]$, $c > 0$ and $d \in \N$.  Here $\Log(p) + \Log(p')$ denotes the Minkowski sum of $\Log(p)$ and $\Log(p')$ in $\N^n$, and $d \Log(p)$ is as defined in Section \ref{sec: introducton}.

\begin{lemma} \label{lem: prep}
Let $r\in\R_+[\x]$.  For each $N \in \N$,
    there exists $g_N \in \R_+[\x]$ with $\Log(g_N) = \Union_{d = 0}^N d\Log(r)$ such that 
$
g_N \congruent 1 \pmod{r - 1}
$.
\end{lemma}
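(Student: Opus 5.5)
The natural candidate is an average of the powers of $r$:
\[
g_N := \frac{1}{N+1}\sum_{d=0}^{N} r^d .
\]
I will check that it has nonnegative coefficients, the right $\Log$ set, and the right residue modulo $r-1$.

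First, $g_N \in \R_+[\x]$. This holds because $\R_+[\x]$ is closed under products, sums and multiplication by positive scalars, and each $r^d$ lies in $\R_+[\x]$.

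Second, the $\Log$ set. By the identities noted just before the lemma,
\[
\Log(r^d) = d\Log(r), \qquad \Log(p+p') = \Log(p)\union\Log(p'), \qquad \Log(cp)=\Log(p) \text{ for } c>0 .
\]
Applying them to the sum defining $g_N$ gives $\Log(g_N)=\Union_{d=0}^N d\Log(r)$. The additivity of $\Log$ under sums relies on there being no cancellation, which holds since all coefficients are nonnegative. The term $d=0$ contributes $r^0=1$, whose $\Log$ is $0\Log(r)=\{(0,\dots,0)\}$, consistent with the paper's convention.

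Third, the congruence. Since $r\congruent 1 \pmod{r-1}$, we have $r^d-1=(r-1)(r^{d-1}+\cdots+1)$, so $r^d\congruent 1 \pmod{r-1}$ for every $d$. Hence
\[
g_N \congruent \frac{1}{N+1}(N+1)=1 \pmod{r-1}.
\]

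There is no real obstacle here. The only points needing care are the edge cases: the $d=0$ term must be included so that the constant monomial appears in $\Log(g_N)$, and the case $r=0$ should be checked. If $r=0$, then $0\Log(r)=\{0\}$ and $d\Log(r)=\emptyset$ for $d\geq 1$, so the required $\Log$ set is $\{0\}$, and $g_N=1/(N+1)$ is nonzero. Since $r-1=-1$ is a unit, every congruence modulo $r-1$ holds trivially. So the construction works in this case as well.
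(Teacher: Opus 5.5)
Your proof is correct and is the paper's argument: the paper also takes $g_N = \frac{1}{N+1}\sum_{d=0}^N r^d$, gets $\Log(g_N)$ from the identities for $\Log$ of sums, positive multiples and powers, and gets the congruence from $r^d \congruent 1 \pmod{r-1}$. Your separate check of the case $r=0$ is harmless but not needed, since the general argument already covers it.
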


\begin{proof}
Let $N \in \N$ be given. Then $1 \congruent (1/(N + 1))\sum_{d = 0}^N r^d =: g_N \pmod{r - 1}$.
Indeed $\Log(g_N) = \Log(\sum_{d = 0}^N r^d) = \Union_{d = 0}^N \Log(r^d) = \Union_{d = 0}^N d \Log(r)$.
\end{proof}

From now onwards, we fix $r \in \R_+[\x]$ with $\Log(r) \supseteq \MultiIndGrade{1}$ as in Theorem \ref{thm: mainTheorem}.  Consider the principal ideal $(r - 1) \subset \R[\x]$ generated by $r - 1$, and let
\begin{equation}\label{eq: Ssubsemialg} S := \R_+[\x] + (r - 1) \subset \R[\x],\end{equation} which is an $\R_+$-subsemialgebra of $\R[\x]$.
Decompose $\N^n = \coprod_{d = 0}^\infty \MultiIndGrade{d}$ as a disjoint union of the subsets $\MultiIndGrade{d} := \{\alpha=(\alpha_1,\ldots,\alpha_n) \in \N^n :\, \alpha_1 + \cdots + \alpha_n = d\}$ indexed by $d \in \N$. 
For example, $\Log((x_1 + \cdots + x_n)^d) = \MultiIndGrade{d} = d \MultiIndGrade{1}$ for all $d \in \N$.

\begin{proposition} \label{lem: algebraicInteriorPoints}
Let $f \in \R[\x]$. Then the following statements are equivalent:
\begin{enumerate}[(i)]
\item $f \in S^\circ$.
\item There exists some $N_o\in\N$ such that for every integer $N\geq N_o$, 
there exists some $q_N \in \R_+[\x]$ with $\Log(q_N) = \Union_{d = 0}^N d\Log(r)$ such that \eqref{eq: certificate} holds.
\item There exist some $N \in \N$ and some $q_N \in \R_+[\x]$ with $\Log(q_N) = \Union_{d = 0}^N d\Log(r)$ such that \eqref{eq: certificate} holds.
\end{enumerate}
\end{proposition}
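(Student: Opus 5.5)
We plan to prove (ii)$\Rightarrow$(iii)$\Rightarrow$(i)$\Rightarrow$(ii). The implication (ii)$\Rightarrow$(iii) is trivial. The other two implications rest on two preliminary facts. First, $1 \in S^\circ$, i.e.\ $S$ is archimedean. Second, as a consequence, an element $u \in A=\R[\x]$ lies in $S^\circ$ if and only if $u - \epsilon \in S$ for some real $\epsilon > 0$.

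\textbf{Step 1: $S$ is archimedean.} Write $r = \sum a_\alpha x^\alpha$ and let $c_i > 0$ be the coefficient of $x_i$ in $r$; it exists because $\Log(r) \supseteq \MultiIndGrade{1}$. Then
\[
1 - c_i x_i = (r - c_i x_i) - (r-1) \in \R_+[\x] + (r-1) = S,
\]
so $x_i \le_S C$ with $C := \max_i 1/c_i$. For a monomial $x^\alpha$ with $|\alpha| = k$, we telescope $C^k - x^\alpha$ as a sum of terms of the form $(C - x_i)\cdot(\text{monomial with nonnegative coefficient})$. Each such term lies in $S$, since $S$ is closed under products and sums. Hence $x^\alpha \le_S C^{|\alpha|}$. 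Also $-x^\alpha \le_S 0$, because $x^\alpha \in S$. Summing these bounds with the absolute values of the coefficients gives, for every $g \in \R[\x]$, a constant $C_g > 0$ with $g \le_S C_g$. Thus $1 \in S^\circ$. I expect this step to be the main point of the proof: it is exactly where the hypothesis $\Log(r)\supseteq\MultiIndGrade{1}$ enters the archimedean property.

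\textbf{Step 2: $S^\circ = \{u : u - \epsilon \in S \text{ for some } \epsilon>0\}$.} Suppose $u - \epsilon \in S$. Given $g$, Step 1 yields $g \le_S C_g \le_S (C_g/\epsilon)\,u$, so $u \in S^\circ$. Conversely, if $u \in S^\circ$, choose $C>0$ with $1 \le_S C u$; then $u - 1/C \in S$.

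\textbf{(iii)$\Rightarrow$(i).} Since $0 \in 0\Log(r)$, the polynomial $q_N$ has a constant term $a_0 > 0$. Hence $q_N - a_0 \in \R_+[\x] \subset S$. Because $f \congruent q_N \pmod{r-1}$, we get $f - a_0 \in S$, and Step 2 gives $f \in S^\circ$.

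\textbf{(i)$\Rightarrow$(ii).} By Step 2 there is $\epsilon > 0$ with $f - \epsilon \in S$; write $f - \epsilon = p + h(r-1)$ with $p \in \R_+[\x]$. The key observation is the following. If $|\alpha| = k$, then
\[
\alpha \in k\MultiIndGrade{1} \subset k\Log(r),
\]
again using $\Log(r)\supseteq\MultiIndGrade{1}$. Therefore
\[
\Log(p) \subset \Union_{d=0}^{N} d\Log(r) \quad \text{whenever } N \ge N_o := \deg p.
\]
For such $N$, take $g_N$ from Lemma~\ref{lem: prep} and set $q_N := p + \epsilon g_N \in \R_+[\x]$. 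Then
\[
\Log(q_N) = \Log(p) \union \Union_{d=0}^N d\Log(r) = \Union_{d=0}^N d\Log(r).
\]
Moreover $f \congruent p + \epsilon \congruent p + \epsilon g_N = q_N \pmod{r-1}$, which is \eqref{eq: certificate}.
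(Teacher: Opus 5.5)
Your proof is correct. Your (i)$\Rightarrow$(ii) and (iii)$\Rightarrow$(i) have the same skeleton as the paper's: your $\epsilon$ is the paper's $1/C$. One tiny fix: take $N_o := \deg(1+p)$ or $\max(\deg p, 0)$, as the paper does, because $\deg p$ is undefined when $p = 0$.

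Where you genuinely differ is in how an arbitrary polynomial gets bounded above by a constant modulo $S$, which is the substance of (iii)$\Rightarrow$(i).

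\textbf{Your route.} You first prove that $S$ is archimedean by the textbook method.
\begin{enumerate}[(1)]
\item The linear terms of $r$ give $1 - c_i x_i = (r - c_i x_i) - (r-1) \in S$, so each generator $x_i$ is $S$-bounded.
\item The telescoping identity propagates this bound to all monomials.
\item The general order-unit fact ``$u \in S^\circ$ iff $u - \epsilon \in S$ for some $\epsilon > 0$'' (valid once $1 \in S^\circ$) finishes the argument.
\end{enumerate}

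\textbf{The paper's route.} It never proves archimedeanity separately. Instead it reuses Lemma~\ref{lem: prep}: $1 \congruent g_M = \frac{1}{M+1}\sum_{d=0}^M r^d$. The support of $g_M$ contains every monomial of degree $\le M$, because $d\Log(r) \supseteq d\MultiIndGrade{1} = \MultiIndGrade{d}$. Hence $Cag_M - h_+ \in \R_+[\x]$ for large $C$, which bounds $h_+$ (and therefore $h$) by a constant in one stroke. Archimedeanity then drops out as Corollary~\ref{cor: arch}, by applying (iii)$\Rightarrow$(i) to $f = 1$ with $q_0 = 1$.

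\textbf{What each buys.} Your version is more modular. It makes transparent that only the positive coefficients $c_i$ of the linear terms of $r$ are needed for archimedeanity. It also isolates a reusable general fact about archimedean semialgebras. The paper's version is more economical: both nontrivial implications run off the single support identity of Lemma~\ref{lem: prep}. It also stays entirely within the $\Log$ bookkeeping in which the theorem is phrased.
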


\begin{proof}  
(i) $\implies$ (ii): Suppose that $f \in S^\circ$. Then $1 \le_S Cf$ for some real $C > 0$. Thus $1 + p + h(r - 1) = Cf$ for some $p \in \R_+[\x]$ and some $h\in \R[\x]$. Thus $f \congruent (1/C)(1 + p) \pmod{r - 1}$.
Let $N_o := \deg(1+p)$. 
%Let $N_o := \deg(p)$ if $p\neq 0$ (resp. $N_o := 0$ if $p=0$). 
Then for each integer $N \ge N_o$, it follows from Lemma \ref{lem: prep} that there exists 
some $g_N \in \R_+[\x]$ such that $g_N \congruent 1 \pmod{r - 1}$ and $\Log(g_N) = \Union_{d = 0}^N d\Log(r)$.  Then
\begin{equation}
f \congruent \frac{1}{C}\left(g_N + p\right)
=: q_N \pmod{r - 1},
\end{equation}
so that \eqref{eq: certificate} holds with $q_N\in \R_+[\x]$. 
We proceed to prove that $\Log(q_N)= \Union_{d = 0}^N d\Log(r)$.  
Since $N\geq N_o= \deg(1+p)$, it follows that
%Since $N\geq N_o= \deg(p)$ if $p\neq 0$ (resp. $N\geq N_o= 0 $ and $\Log(p)=\emptyset$ if $p= 0$), it follows that
\begin{equation}\label{eq: logp}
\Log(p)\subset \Log(1+p) \subset \Union_{d = 0}^{N_o} \N^n_d = \Union_{d =0}^{N_o} d\N^n_1 \subset \Union_{d = 0}^N d\Log(r).
\end{equation}
%\begin{equation}\label{eq: logp}
%\Log(p) \subset \Union_{d = 0}^{N_o} \N^n_d = \Union_{d =0}^{N_o} d\N^n_1\subset \Union_{d = 0}^{N} d\N^n_1.
%\end{equation}
Hence we have 
$\Log(q_N) = \Log(g_N + p) = \Log(g_N) \union \Log(p)=\Union_{d = 0}^N d\Log(r) \union \Log(p)=  \Union_{d = 0}^N d\Log(r)$,
where the last equality follows from \eqref{eq: logp}.

\medskip
\noindent
(ii) $\implies$ (iii): Obvious.

\medskip
\noindent
(iii) $\implies$ (i): Suppose that \eqref{eq: certificate} holds for some $N \in \N$ and some $q_N \in \R_+[\x]$ with $\Log(q_N) = \Union_{d = 0}^N d\Log(r)$. 
Let $a $ be the constant term of $q_N$.  Then $a>0$ since $(0, \dots, 0) \in 0 \Log(r) \subset \Log(q_N)$.  Together with (1), we have $a \le_S f$, since 
$f-a =(q_N-a)+(f - q_N) \in \R_+[\x] + (r - 1) =S$.

Now let $h \in \R[\x]$ be given. Write $h=h_{+}+h_{-}$, where $h_{+}$ (resp. $h_{-}$) denotes the sum of the monomial terms of $h$ with positive (resp. negative) coefficients.  Then we have
$h_{+}\in\R_+[\x] $ and $h\le_S h_{+}$.
We are going to show that $ h_{+}\le_S Ca$ for all sufficiently large $C > 0$.  Obviously this holds if $h_{+}=0$. Thus we only consider the case when $h_+\neq  0$, so that $\Log(h_{+})\neq\emptyset$.   Let $M := \deg(h_{+})$ so that $\Log(h_{+}) \subset \Union_{d = 0}^{M} \N^n_d = \Union_{d = 0}^{M} d\N^n_1$. By Lemma \ref{lem: prep}, there exists some $g_{M} \in \R_+[\x]$ such that $g_M \congruent 1 \pmod{r - 1}$ and $\Log(g_M) = \Union_{d = 0}^M d\Log(r)$. 
Then for each real $C > 0$,
\begin{equation} \label{eq: UnitAsAlgInteriorPoint}
Ca - h_{+} \congruent Cag_M  - h_{+} \pmod{r - 1}.
\end{equation}
Write $h_{+}=\sum_{\alpha\in\Log(h_{+})}a_\alpha x^\alpha $ and 
$g_M=\sum_{\beta\in\Log(g_M)}b_\beta x^\beta$, so that the coefficients $a_\alpha $'s and $b_\beta$'s are all positive.
Since $ \Log(h_{+}) \subset \Union_{d = 0}^{M} d\N^n_1 \subset \Union_{d = 0}^M d\Log(r) = \Log(g_M)$, one easily sees that
\begin{equation}
Cag_M-h_{+}\in \R_+[\x]\quad \textrm{for all }C>\dfrac{ \max_{\alpha\in\Log(h_{+})}a_\alpha  }{a\min_{\beta\in\Log(g_M)}b_\beta  }.
\end{equation} 
Together with \eqref{eq: UnitAsAlgInteriorPoint}, it follows that $h_{+}\le_S Ca$ for all large $C$. In summary, we have $h\le_S h_{+}\le_S Ca\le_S Cf $ (so that $h\le_S Cf$) for all large $C$.  
Since $h \in \R[\x]$ is arbitrary, it follows that $f \in S^\circ$. 
\end{proof}

\begin{corollary} \label{cor: arch}
$S \subset \R[\x]$ is archimedean.
\end{corollary}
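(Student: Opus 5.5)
The plan is to apply Proposition \ref{lem: algebraicInteriorPoints} to the constant polynomial $f = 1$. Being archimedean means exactly that $1 \in S^\circ$. By the implication (iii) $\implies$ (i) of that proposition, it suffices to produce some $N \in \N$ and some $q_N \in \R_+[\x]$ with $\Log(q_N) = \Union_{d=0}^N d\Log(r)$ and $1 \congruent q_N \pmod{r-1}$.

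Lemma \ref{lem: prep} supplies this directly. For any $N \in \N$, for instance $N = 0$, it gives $g_N \in \R_+[\x]$ with $\Log(g_N) = \Union_{d=0}^N d\Log(r)$ and $g_N \congruent 1 \pmod{r-1}$. Setting $q_N := g_N$ then verifies statement (iii) of the proposition for $f = 1$. The simplest choice is $N = 0$, where $g_0 = 1$ and $\Log(g_0) = \{(0,\dots,0)\} = 0\Log(r)$. Hence $1 \in S^\circ$, and $S$ is archimedean.

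There is no real obstacle here, since the substantive work sits in the (iii) $\implies$ (i) direction of Proposition \ref{lem: algebraicInteriorPoints}. That direction is where the hypothesis $\Log(r) \supseteq \MultiIndGrade{1}$ is used: it guarantees that every monomial is dominated modulo $r-1$ by a positive multiple of some $g_M$. The corollary itself only needs the observation that the constant $1$ trivially satisfies the certificate condition.
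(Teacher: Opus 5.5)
Your proposal is correct and is the paper's own argument. The paper likewise notes that $f=1$ satisfies condition (iii) of Proposition~\ref{lem: algebraicInteriorPoints} with $N=0$ and $q_0=1$, and concludes $1\in S^\circ$ via the implication (iii) $\implies$ (i).
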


\begin{proof}
Note that $f=1$ satisfies Condition (iii) in Proposition \ref{lem: algebraicInteriorPoints} with $N=0$ and $q_N=1$.  
Thus Proposition \ref{lem: algebraicInteriorPoints} implies that $1 \in S^\circ$, i.e.\ $S \subset \R[\x]$ is archimedean.
\end{proof}

Now we are ready to give the proof of Theorem \ref{thm: mainTheorem} as follows:

\begin{proof}[Proof of Theorem \ref{thm: mainTheorem}]
Let $r \in \R_+[\x]$ with $\Log(r) \supseteq \MultiIndGrade{1}$ be as in Theorem \ref{thm: mainTheorem}. Let $S$ be as in \eqref{eq: Ssubsemialg}. Note that $\R$-algebra homomorphisms from $\R[\x] $ to $ \R$ correspond to evaluation at points in $\R^n$. For any point $x \in \R^n$, let $\hat{x} : \R[\x] \to \R$ be the associated $\R$-algebra homomorphism given by $\hat{x}(f) = f(x)$ for all $f \in \R[\x]$. Since $x_1, \dots, x_n, r - 1, 1 - r$ generate $S$ as an $\R_+$-semialgebra, it follows that $\varphi  \in \mathcal{X}(S)$ if and only if $\varphi=\hat{x}$ for some $x \in \{r = 1\}_+$. 
Therefore, for $f\in \R[\x]$, we have $f > 0$ on $\{r = 1\}_+$ if and only if $\varphi(f) > 0$ for all $\varphi \in \mathcal{X}(S)$ if and only if $f \in S^\circ$, by the Archimedean Representation Theorem (Theorem \ref{thm: ArchRepThm} with $A = \R[\x]$).
Therefore the equivalence of (a), (b), and (c) in Theorem \ref{thm: mainTheorem} follows from Proposition \ref{lem: algebraicInteriorPoints}.
%
%Now let $f \in \R[\x]$ be given. 
%
%\medskip\noindent
%(i) $\implies$ (ii):  If $f>0$
%on $\{r = 1\}_+$, then $\hat{x}(f) > 0$ for all $\hat{x} \in \mathcal{X}(S)$.  Therefore, by the Archimedean Representation Theorem, we have $f \in S^\circ$. Therefore, by Lemma \ref{lem: algebraicInteriorPoints}, there exists some $N_o\in\N$ such that for every integer $N\geq N_o$, there exists some $q_N \in \R_+[\x]$ with $\Log(q_N) = \Union_{d =0}^N d \Log(r)$ such that \eqref{eq: certificate} holds. 
%
%\medskip
%\noindent
%(ii) $\implies$ (iii): Obvious.
%
%\medskip
%\noindent
%(iii) $\implies$ (i):  Suppose that there exist some $N \in \N$ and some $q_N \in  \R_+[\x]$ with $\Union_{d = 0}^N d\Log(r)$ such that \eqref{eq: certificate} holds. Then, by Lemma \ref{lem: algebraicInteriorPoints}, $f \in S^\circ$. Therefore, by the Archimedean Representation Theorem (Theorem \ref{thm: ArchRepThm}) (with $A=\R[\x]$ and $S$ as in \eqref{eq: Ssubsemialg}), for all $\varphi $
%Take an arbitrary point 
%$x=(x_1, \dots, x_n)\in \{r = 1\}_+$. The congruence
%in \eqref{eq: certificate} implies that $f(x) = q_N(x)$. Write $q_N=\sum_{\beta\in \Log(q_N)}b_\beta x^\beta$, so that each $b_\beta>0$. Since $(0, \dots, 0) \in 0\Log(r) \subset \Union_{d = 0}^N d\Log(r) = \Log(q_N)$ and $x_1, \dots, x_n \ge 0$, it follows readily that $q_N(x) \ge b_{(0, \dots, 0)} > 0$. Therefore $f(x) = q_N(x) > 0$, which gives (i). 
\end{proof}

\paragraph{Acknowledgements.}  The first author would like to thank Professor Claus Scheiderer for an informal chat regarding this paper.

%===================================================================%

%\bigskip\noindent
%Colin Tan
\par\noindent
%{\it Email address}:  ???

\bigskip\noindent
{\it Corresponding author:}
\par\noindent
Wing-Keung To
\par\noindent
Department of Mathematics, National University of Singapore,
10 Lower Kent Ridge Road, Singapore 119076
\par\noindent
{\it Email address}:  mattowk@nus.edu.sg


\begin{thebibliography}{99}

\bibitem{BSTW}
Baldi, L., Sinn R., Telek M., Weiger, J.:
Toric extensions of P\'olya's theorem,
arxiv preprint: arXiv:math/9511201.


\bibitem{Barvinok2002}
Barvinok, A.: 
\textit{A course in convexity}, Graduate Studies in Mathematics, \textbf{54}, 
American Mathematical Society, Providence, RI, 2002. 

\bibitem{BerrWoermann2001}
Berr, R., W\"{o}rmann, T.:
Positive polynomials on compact sets, 
\textit{Manuscripta Math.} \textbf{104} (2001), no. 2, 135--143.

%\bibitem{bcr}
%J. Bochnak, M. Coste, M.-F.\ Roy:
%\emph{Real Algebraic Geometry}.
%Erg.\ Math.\ Grenzgeb.\ (3) \textbf{36}, Springer, Berlin, 1998.


%\bibitem{BSS12}
%Burgdorf, Sabine; Scheiderer, Claus; Schweighofer, Markus:
%Pure states, nonnegative polynomials and sums of squares. 
%\textit{Comment. Math. Helv.} \textbf{87} (2012), no. 1, 113--140.

%\bibitem{Catlin99}
%D. Catlin:
%The Bergman kernel and a theorem of Tian.
%Analysis and geometry in several complex variables (Katata, 1997),
%1--23, Trends Math., Birkh\"auser Boston, Boston, MA, 1999.

\bibitem{CatlinDangelo1996}
Catlin, D., D'Angelo, J.: 
A stabilization theorem for Hermitian forms and applications
to holomorphic mappings, \textit{Math.\ Res.\ Lett.} {\bf 3} (1996), no. 2, 149--166.

%\bibitem{CatlinDangelo1997}
%D. Catlin, J. D'Angelo:
%Positivity conditions for bihomogeneous polynomials.
%\textit{Math.\ Res.\ Lett.} {\bf 6} (1997),  555--567.

%\bibitem{CatlinDangelo1999}
%Catlin, D., D'Angelo, J.:
%An isometric imbedding theorem for holomorphic bundles.
%\textit{Math.\ Res.\ Lett.} {\bf 6} (1999),  43-60.

%\bibitem{CimpricZalar2013}
%\Cimpric, Jaka; Zalar, \Aljaz:
%Moment problems for operator polynomials. 
%\textit{J. Math. Anal. Appl.} \textbf{401} (2013), no. 1, 307--316.

\bibitem{DickinsonPovh2015}
Dickinson, P.J.C., Povh, J.: On an extension of P\'olya’s Positivstellensatz, \textit{J. Glob. Optim.} \textbf{61} (2015), 615--625. 

%\bibitem{DinhToLe2021}
%Dinh, Trung Hoa; Ho, Minh Toan; Le, Cong Trinh:
%Positivstellens\"{a}tze for polynomial matrices. 
%\textit{Positivity} \textbf{25} (2021), no. 4, 1295--1312.

%\bibitem{EffrosHandelmanShen1980}
%Effros, Edward G.; Handelman, David E.; Chao Liang Shen:
%Dimension groups and their affine representations. 
%\textit{Amer. J. Math.} \textbf{102} (1980), no. 2, 385--407.

%\bibitem{Eidelheit1936}
%Eidelheit, M.: Zur Theorie der konvenxen Mengen in linearen normierten %R\"aumen.
%\textit{Stud. Math.} \textbf{6} (1936), 104--111.

%\bibitem{GoodearlHandelman76}
%K. Goodearl, D. Handelman:
%Rank functions and $K_0$ of regular rings. 
%\textit{J.\ Pure Appl.\ Algebra} \textbf{7} (1976), 195--216. 

\bibitem{GoodearlHandelman1980}
Goodearl, K. R.; Handelman, D. E.: Metric completions of partially ordered abelian groups, \textit{Indiana Univ.\ Math.\ J.} \
\textbf{29} (1980), no. 6, 861--895.

%\bibitem{Handelman85}
%Handelman, David:
%Positive polynomials and product type actions of compact groups. 
%\textit{Mem. Amer. Math. Soc.} \textbf{54} (1985), no. 320.

%\bibitem{Handelman86}
%D. Handelman:
%Deciding eventual positivity of polynomials. Ergodic Theory Dynam. Systems %{\textbf{6}} (1986), 57--79.

\bibitem{Handelman1988}
Handelman,  D.:  
Representing polynomials by positive linear functions on compact convex polyhedra, 
\textit{Pac.\ J.\ Math.} \textbf{132} (1988), 35--62.

%\bibitem{Handelman92}
%D. Handelman:
%Polynomials with a positive power. Symbolic dynamics and its applications (New %Haven, CT, 1991), 229--230, Contemp. Math., 135, Amer. Math. Soc., Providence, %RI, 1992.

%\bibitem{Handelman12}
%D. Handelman:
%In praise of order units. J. Algebra Appl. {\textbf{11}}(6) (2012), 1250120, %16 pp . 

\bibitem{HHL1980}
Handelman, D., Higgs, D., Lawrence, J.: Directed abelian groups, countably continuous rings, and Rickart C*-algebras, \emph{J. London Math. Soc.} (2) \textbf{21} (1980), no. 2, 193--202.

\bibitem{HLP1952}
Hardy, G. H., Littlewood, J. E., P\'{o}lya, G.: 
\textit{Inequalities}, 2nd ed., 
Cambridge, at the University Press, 1952.

%\bibitem{Jacobson1964}
%Jacobson, N.: Lectures in abstract algebra III. Princeton: van Nostrand 1964.

%\bibitem{Kakutani1937}
%Kakutani, Shizuo:
%Ein Beweis des Satzes von M. Eidelheit \"{u}ber konvexe Mengen. 
%\textit{Proc. Imp. Acad. Tokyo} \textbf{13} (1937), no. 4, 93--94.

%\bibitem{KlepSchweighofer2010}
%Klep, Igor; Schweighofer, Markus:
%Pure states, positive matrix polynomials and sums of Hermitian squares. 
%\textit{Indiana Univ. Math. J.} \textbf{59} (2010), 857--874.

%\bibitem{Krivine1964a}
%Krivine, Jean-Louis:
%Quelques propri\'{e}t\'{e}s des pr\'{e}ordres dans les anneaux commutatifs unitaires.
%\textit{C. R. Acad. Sci. Paris} \textbf{258} (1964), 3417--3418.

%\bibitem{Krivine1964b}
%Krivine, J.-L.:
%Anneaux pr\'{e}ordonn\'{e}s.
%\textit{J. Analyse Math.} \textbf{12} (1964), 307--326.

%\bibitem{LeDu2018}
%L\^{e}, C\^{o}ng-Tr\`{i}nh; \Du, Th\.{i}-H\`{o}a-B\`{i}nh:
%Handelman's Positivstellensatz for polynomial matrices positive definite on polyhedra. 
%\textit{Positivity} \textbf{22} (2018), no. 2, 449--460.

\bibitem{Polya1928}
P\'olya, G.:
\"Uber positive Darstellung von Polynomen,
\textit{Vierteljschr.\ Naturforsch.\ Ges.\ Z\"urich} \textbf{73} (1928),
141--145, in \textit{Collected Papers} \textbf{2} (1974), MIT Press,
309--313.

\bibitem{PowersReznick2001}
Powers, V., Reznick, B.:
A new bound for P\'olya's theorem with applications to polynomials positive on polyhedra, 
\textit{J. Pure Appl. Algebra} \textbf{164} (2001), 221--229.

\bibitem{PrestelDelzell2001}
Prestel, A., Delzell, C. N.:
\textit{Positive polynomials: From Hilbert's 17th problem to real algebra}, 
Springer Monographs in Mathematics, Springer-Verlag, Berlin, 2001.

\bibitem{PutinarScheiderer2010}
Putinar, M., Scheiderer, C.:
Sums of Hermitian squares on pseudoconvex boundaries, 
\textit{Math. Res. Lett.} \textbf{17} (2010), no. 6, 1047--1053.

%\bibitem{PutinarScheiderer2014}
%Putinar, Mihai; Scheiderer, Claus:
%Quillen property of real algebraic varities.
%\textit{M\"unster Journal} \textbf{7} (2014), 671--696.

\bibitem{PutinarVasilescu1999}
Putinar, M., Vasilescu, F.-H.: Positive polynomials on semialgebraic sets, \textit{C. R. Acad. Sci. Paris S{\'e}r. I Math.} \textbf{328} (1999), no. 7, 585--589.

\bibitem{Quillen1968}
Quillen, D. G.:
On the representation of hermitian forms as sums of squares,
\textit{Invent. Math.} \textbf{5} (1968), 237--242.

\bibitem{Reznick1995}
Reznick, B.: Uniform denominators in Hilbert's seventeenth problem, \textit{Math Z.} \textbf{220} (1995), 75--97.

%\bibitem{Reznick1992}
%Reznick, Bruce: Sums of even powers of real linear forms. 
%\textit{Mem. Amer. Math. Soc.} \textbf{96} (1992), no. 463, viii+155 pp.

%\bibitem{Robinson75}
%A. Robinson:
%Algorithms in algebra. \textit{Model theory and algebra (A memorial tribute to Abraham Robinson)}, pp. 14--40. 
%Lecture Notes in Math., Vol. 498, Springer, Berlin, 1975.

%\bibitem{Sawin2023}
%Will Sawin:
%Modulo $x^2 + y^2 - 1$, is every homogeneous polynomial that is a square of a polynomial, necessarily of sum of squares of homogeneous polynomials?, 
%URL (version: 2023-03-30): https://mathoverflow.net/q/443838

%\bibitem{SchererHol2006}
%Scherer, C. W.; Hol, C. W. J.:
%Matrix sum-of-squares relaxations for robust semi-definite programs. 
%\textit{Math. Program.} \textbf{107} (2006), 189--211.



%\bibitem{sch:surf}
%C. Scheiderer:
%Sums of squares on real algebraic surfaces.
%Manuscr.\ math.\ \textbf{119}, 395--410 (2006).

\bibitem{Scheiderer2009}
Scheiderer, C.:
Positivity and sums of squares: a guide to recent results, 
 \textit{Emerging applications of algebraic geometry}, 271--324, 
 IMA Vol. Math. Appl., \textbf{149}, Springer, New York, 2009.

%\bibitem{Scheiderer12}
%C. Scheiderer:
%A Positivstellensatz for projective real varieties.
%Manuscr.\ math. {\bf 138} (2012), 73--88.

%\bibitem{Scheiderer2016}
%Scheiderer, Claus:
%An observation on positive definite forms.
%Preprint, arXiv:1602.03986.

\bibitem{Scheiderer2024}
Scheiderer, C.:
\textit{A course in real algebraic geometry - positivity and sums of squares}, Grad. Texts in Math., \textbf{303}, Springer, Cham, 2024.


%\bibitem{ScheidererTan2017}
%Scheiderer, Claus; Tan, Colin:
%A Positivstellensatz for forms on the positive orthant. 
%\textit{Arch. Math. (Basel)} \textbf{109} (2017), no. 2, 123--131.

%\bibitem{Schmuedgen1991}
%Schm\"{u}dgen, K.:
%The $K$-moment problem for compact semi-algebraic sets, 
%\textit{Math. Ann.} \textbf{289} (1991), no. 2, 203--206.

%\bibitem{sw}
%M. Schweighofer:
%Certificates for nonnegativity of polynomials with zeros on compact
%semialgebraic sets.
%Manuscr.\ math.\ \textbf{117}, 407--428 (2005).

%\bibitem{Tan2025}
%Tan, Colin:
%Archimedean Representation Theorem for modules over a commutative ring.
%\emph{Beitr.\ Algebra.\ Geom.} \textbf{66} (2025), 51--60.

%\bibitem{Woermann1998}
%W\"ormann, T.:
%Strikt positive Polynome in der semialgebraischen Geometrie, Doctoral %dissertation, \textit{Universit\"at Dortmund}, 1998.

\end{thebibliography}
\end{document}